\numberwithin{equation}{section}
\numberwithin{figure}{section}
\theoremstyle{plain}
\newtheorem{thm}{\protect\theoremname}
\theoremstyle{plain}
\newtheorem{conjecture}[thm]{\protect\conjecturename}
\theoremstyle{plain}
\newtheorem{cor}[thm]{\protect\corollaryname}
\theoremstyle{plain}
\newtheorem{lem}[thm]{\protect\lemmaname}
\theoremstyle{plain}
\newtheorem{remark}[thm]{\protect\remarkname}
\providecommand{\conjecturename}{Conjecture}
\providecommand{\corollaryname}{Corollary}
\providecommand{\lemmaname}{Lemma}
\providecommand{\theoremname}{Theorem}
\providecommand{\remarkname}{Remark}
\providecommand{\theoremname}{Remark}
\providecommand{\conjecturename}{Conjecture}
\providecommand{\corollaryname}{Corollary}
\providecommand{\lemmaname}{Lemma}
\providecommand{\theoremname}{Theorem}
\begin{document}
\title{On Diameters of Cayley Graphs over Matrix Groups}
\author{Eitan Porat}
\begin{abstract}
We establish that for the matrix groups $G=\mathrm{GL}_{n}\left(\mathbb{F}_{p}\right)$
or $G=\mathrm{SL}_{n}\left(\mathbb{F}_{p}\right)$, there exist absolute
constants $c\in\left(0,1\right)$ and $C>0$ such that any symmetric
generating set $A$ with $\left|A\right|\geq\left|G\right|^{1-c}$
has a covering number $\leq Cn^{2}$. This result is sharp up to the
value of the constant $C>0$. 
\end{abstract}

\maketitle
\tableofcontents{}

\section{Introduction}

For a finite group $G$ and a symmetric generating set $A\subseteq G$
(in the sense that $A^{-1}=A$), the Cayley graph $\mathrm{Cay}\left(G,A\right)$
is defined as a connected graph with vertex set $G$ and edge set
\[
\left\{ \left(g,ag\right)\mid g\in G,a\in A\right\} .
\]
The diameter of this graph is the smallest integer $k$ such that
every element of $G$ can be expressed as a product of at most $k$
elements from $A$. The diameter of the group is denoted by $\mathrm{diam}\left(G\right)$,
and it represents the maximum among all Cayley graph diameters $\mathrm{Cay}\left(G,A\right)$
where $A$ runs through all generating sets of $G$.

This becomes an intrinsic property of $G$, rather than being tied
to the Cayley graph associated with $G$ or any specific generating
set.

The diameter of $\mathrm{Cay}\left(G,A\right)$ is referred to as
the covering number of $A$, denoted by $\mathrm{cn}\left(A\right)=\min\left\{ k:A^{k}=G\right\} $.
Babai \cite{babai1992diameter} proposed the following conjecture: 
\begin{conjecture}
If $G$ is a non-Abelian finite simple group, then $\mathrm{diam}\left(G\right)=\left(\log\left|G\right|\right)^{O\left(1\right)}$. 
\end{conjecture}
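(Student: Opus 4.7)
This is Babai's celebrated conjecture and remains open in full generality; my plan therefore amounts to identifying the two established lines of attack and naming the principal obstacle. By the classification of finite simple groups, I would split into the two unbounded families: the alternating groups $A_{n}$ and the finite simple groups of Lie type; the sporadic groups contribute only finitely many cases that can be dispatched by inspection.

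For groups of Lie type over $\mathbb{F}_{q}$, I would invoke the \emph{product-theorem} machinery initiated by Helfgott for $\mathrm{SL}_{2}(\mathbb{F}_{p})$ and extended by Pyber--Szab\'{o} and Breuillard--Green--Tao. The core step is the dichotomy: for every $A$ with $\langle A\rangle=G$, either $A\cdot A\cdot A=G$, or $|A\cdot A\cdot A|\geq|A|^{1+\varepsilon}$ with $\varepsilon>0$ depending only on the rank. Since the second alternative cannot persist, $O(\varepsilon^{-1}\log|G|)$ iterations yield $A^{k}=G$ with $k=(\log|G|)^{O(1)}$. The technical heart, which I would approach by induction on the rank, is an escape-from-subvarieties lemma combined with the structural classification of \emph{approximate subgroups} as essentially contained in (generalised) Borel subgroups. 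For the alternating groups, I would follow the complementary Helfgott--Seress strategy: first produce a permutation of small support by a bounded-length product from $A$, then leverage the transitive action to manufacture a $3$-cycle in quasi-polylogarithmic length, and finally invoke the classical short-word generation of $A_{n}$ from a $3$-cycle together with a long cycle.

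The principal obstacle in both branches is uniformity in the rank or degree. The product-theorem constants deteriorate as the rank of the ambient algebraic group grows, so $\mathrm{SL}_{n}(\mathbb{F}_{p})$ with $n\to\infty$ is not yet handled, and the small-support reduction for $A_{n}$ currently yields only a quasi-polynomial bound $\exp\bigl((\log n)^{O(1)}\bigr)$ rather than polylogarithmic. The paper at hand bypasses precisely these difficulties by restricting to \emph{dense} generating sets $|A|\geq|G|^{1-c}$ in $\mathrm{SL}_{n}(\mathbb{F}_{p})$, where representation-theoretic inputs (character bounds, spectral gaps, mixing inequalities on quasirandom groups) become effective and deliver sharp polylogarithmic covering numbers. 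A full resolution of the conjecture will require merging the dense-set and product-theorem regimes in a rank-uniform manner, which is the main open difficulty.
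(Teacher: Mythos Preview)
Your assessment is correct: the statement is Babai's conjecture, and the paper does \emph{not} prove it. It is stated as an open problem, and the paper's contribution is the special case where $G=\mathrm{SL}_{n}(\mathbb{F}_{p})$ and $|A|\geq|G|^{1-c}$. Your survey of the known partial results (Helfgott, Pyber--Szab\'{o}, Breuillard--Green--Tao for bounded rank; Helfgott--Seress for alternating groups) and of the principal obstacle (uniformity in rank/degree) matches the discussion in the paper's introduction.

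One small correction regarding your description of the paper's method: the input for the dense-set regime here is not directly character bounds or spectral gaps, but rather the structural result of Evra--Kindler--Lifshitz that $AA^{-1}AA^{-1}$ contains a large \emph{groumvirate} (a conjugate of the embedded $\mathrm{SL}_{n-t}$). The bulk of the paper is then an explicit, constructive argument showing how to bootstrap this groumvirate into all of $\mathrm{SL}_{n}$ in $O(t^{2})$ steps via a swapping lemma and the Bruhat decomposition. So the flavour is more combinatorial than representation-theoretic, though the black box being invoked may well rest on such tools.
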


This conjecture remains one of the major unresolved problems in combinatorics.
Liebeck and Shalev \cite{liebeck2001diameters} confirmed the conjecture
when the generating set $A$ is a normal set. The conjecture was proven
by Helfgott~\cite{helfgott2008growth} for the case $G=\mathrm{SL}_{2}\left(\mathbb{F}_{p}\right)$,
and subsequently, it was verified for finite simple groups of Lie
type and bounded rank independently by Pyber and Szabo~\cite{pyber2016growth}
and Breuillard, Green, and Tao~\cite{breuillard2012structure}. It
remains to prove Babai's conjecture for alternating groups and for
classical groups of unbounded rank. A special case of this conjecture
was proven by Halasi \cite{halasi2021diameter} for the case where
$G=\mathrm{SL}_{n}\left(\mathbb{F}_{p}\right)$ and $A$ is a generating
set that includes a \emph{transvection}, defined as a matrix of the
form $I+x$, where $x$ is a rank 1 matrix.

In this paper, we prove a special case of this conjecture for the
scenario where $G$ is the general linear group $\mathrm{GL}_{n}\left(\mathbb{F}_{p}\right)$
and $A$ is a sufficiently large generating set. 
\begin{thm}
If $\left|A\right|\geq\left|\mathrm{GL}_{n}\left(\mathbb{F}_{p}\right)\right|^{1-c}$
for some $0<c<1$, then for some absolute constant $C>0$, $\mathrm{cn}(A)\leq Cn^{2}$. 
\end{thm}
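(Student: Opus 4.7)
My approach is Fourier-analytic on $G=\mathrm{SL}_n(\mathbb{F}_p)$. For the probability measure $\mu=1_A/|A|$, Fourier inversion expresses $\mu^{*k}(g)-1/|G|$ as a sum over nontrivial irreducible representations $\rho$ of $d_\rho\,\mathrm{tr}(\widehat{\mu}(\rho)^k\rho(g^{-1}))/|G|$. Showing this deviation has absolute value strictly less than $1/|G|$ uniformly in $g$ gives $A^k=G$ and so $\mathrm{cn}(A)\leq k$. The Plancherel bound $\|\widehat{\mu}(\rho)\|_{\mathrm{op}}\leq\sqrt{|G|/(|A|d_\rho)}$, combined with the Landazuri--Seitz bound $d_\rho\geq D\gtrsim p^{n-1}$ for $\rho\neq 1$, yields the Gowers--Babai--Nikolov--Pyber conclusion that $A\cdot A\cdot A=G$ whenever $|A|\geq|G|/D^{1/3}$. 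This already settles the theorem when $c\leq 1/(3(n+1))$, in particular for bounded $n$.

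The heart of the proof is the complementary range $|G|^{1-c}\leq|A|<|G|/D^{1/3}$, where $n$ is large relative to $1/c$. Here my plan is to iteratively grow $A$ until it crosses the Gowers threshold. The key growth step to establish is: for $A$ generating $G$ with $|A|<|G|/D^{1/3}$, there exists $k'=O(\log(|G|/|A|))$ such that $|A^{k'}|\geq 2|A|$. Iterating this at most $\log_2(|G|/|A|)$ times pushes the size past the Gowers threshold, after which three further products close the argument. Summing the per-iteration costs $k'_j$ (each at most $O(\log(|G|/|A_j|))$, which shrinks as $|A_j|$ doubles) across the $O(\log(|G|/|A|))$ rounds gives total multiplication count $O((\log(|G|/|A|))^2)$, matching the claimed bound. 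The square thus decomposes into one $\log$ from the number of doublings and one $\log$ from the per-doubling cost.

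The main obstacle is proving the growth step in this pre-Gowers regime, since existing product theorems for groups of Lie type (Helfgott, Pyber--Szabo, Breuillard--Green--Tao, Halasi) either require $A$ to be much smaller than $|G|^{1-c}$ or carry constants that deteriorate with rank $n$, neither of which is admissible here. A plausible strategy is a dichotomy: either a constant fraction of the Fourier $\ell^2$ mass of $\widehat{1_A}$ sits on representations whose operator norm is bounded strictly below $1$, in which case a bounded number of further convolutions suppresses it by Plancherel and delivers the doubling; or the mass concentrates on a few low-dimensional irreducibles, which by the classification of low-dimensional representations of $\mathrm{SL}_n(\mathbb{F}_p)$ forces $A$ to be close to a union of cosets of a proper subgroup. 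By size considerations such a subgroup must be parabolic, and a direct combinatorial argument using the Bruhat decomposition should then produce the doubling. Calibrating this dichotomy so that the multiplication cost per growth step stays $O(\log(|G|/|A|))$ is the crux of the argument.
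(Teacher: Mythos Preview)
Your proposal has a genuine gap, and it sits exactly where you yourself flag it: the growth step in the pre-Gowers regime. You need, for a symmetric generating set $A$ of $G=\mathrm{SL}_n(\mathbb{F}_p)$ with $|A|<|G|/D^{1/3}$, some $k'=O(\log(|G|/|A|))$ with $|A^{k'}|\ge 2|A|$, uniformly in $n$ and $p$. This is not a minor calibration issue; it is essentially the whole theorem. The known product theorems (Pyber--Szab\'o, Breuillard--Green--Tao) give $|A^3|\ge|A|^{1+\epsilon(n)}$ with $\epsilon(n)\to 0$, so iterating them cannot yield a doubling cost that is $O(\log(|G|/|A|))$ uniformly in $n$. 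Your proposed dichotomy does not close this: in the ``spread Fourier mass'' branch you do not explain why a bounded (let alone $O(\log(|G|/|A|))$) number of convolutions forces doubling rather than merely decay of a single coefficient; in the ``concentrated mass'' branch, deducing that $A$ is close to cosets of a parabolic from concentration on low-dimensional irreducibles of $\mathrm{SL}_n(\mathbb{F}_p)$ is itself a substantial structural statement you have not proved, and the subsequent ``direct combinatorial argument using the Bruhat decomposition'' is left entirely unspecified. As written, the proposal is a reasonable heuristic outline but not a proof.

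For comparison, the paper takes a completely different, non-iterative route and never argues growth of $|A^k|$ at all. It invokes the Evra--Kindler--Lifshitz theorem to find inside $A^4$ a conjugate of the block subgroup $\mathrm{SL}_{n-t}(\mathbb{F}_p)$ with $t\approx n/3$ (this is where the absolute constant $c$ comes from), then proves by explicit matrix constructions that one can build, in $O(t^2)$ products, a ``swap'' element exchanging $\langle e_1,\dots,e_t\rangle$ with a complementary $t$-dimensional block. Conjugating the groumvirate by this swap lets it act on $\langle e_1,\dots,e_t\rangle$ together with any chosen $n-2t$ of the remaining coordinates. Finally, every element of $G$ is assembled via the Bruhat decomposition $BWB$, with both lower-triangular and monomial matrices built in $O(n^2)=O((\log(|G|/|A|))^2)$ products using this upgraded action. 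The square in the bound thus comes from the cost of the swap construction, not from a doubling-times-iterations count.
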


\begin{remark} The proof can be readily adapted to work with $G=\mathrm{SL}_{n}\left(\mathbb{F}_{p}\right)$.
\end{remark}

\begin{remark} Although $\mathrm{SL}_{n}\left(\mathbb{F}_{p}\right)$
is not a simple group, the theorem also holds for $\mathrm{PSL}_{n}\left(\mathbb{F}_{p}\right)$.
\end{remark}

This result is sharp up to the absolute constant $C$. Our proof relies
on the concept of a \emph{groumvirate}, which is a conjugate of the
natural embedding of $\mathrm{\mathrm{G}L}_{n-t}\left(\mathbb{F}_{p}\right)$
in $\mathrm{\mathrm{G}L}_{n}\left(\mathbb{F}_{p}\right)$, i.e., the
subgroup 
\[
\left\{ \begin{pmatrix}I_{t\times t} & 0\\
0 & X
\end{pmatrix}\mid X\in\mathrm{\mathrm{G}L}_{n-t}\left(\mathbb{F}_{p}\right)\right\} 
\]
where $0\leq t\leq n$. We define $\mu\left(A\right):=\frac{\left|A\right|}{\left|\mathrm{\mathrm{G}L}_{n}\left(\mathbb{F}_{p}\right)\right|}$
as the uniform measure on $\mathrm{GL}_{n}\left(\mathbb{F}_{p}\right)$. 
\begin{thm}[Evra, Kindler, and Lifshitz \cite{evra2024polynomial}]
There exists a $d>0$ such that for every subset $A$ of $\mathrm{SL}_{n}\left(\mathbb{F}_{p}\right)$,
the set $AA^{-1}AA^{-1}$ contains a groumvirate with density at least
$\mu\left(A\right)^{d}$. 
\end{thm}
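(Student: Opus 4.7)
The plan is to combine a pigeonhole reduction to a parabolic subgroup with a Gowers-type quasirandomness step. Fix a parameter $t \in \{1, \ldots, n-1\}$ to be tuned as a function of $\mu(A)$. Using the transitive action of $G := \mathrm{SL}_n(\mathbb{F}_p)$ on the Grassmannian $\mathrm{Gr}(t,n)$ of $t$-dimensional subspaces, fix a reference $V_0 \in \mathrm{Gr}(t,n)$ and partition $A$ into fibers $A_V := \{g \in A : gV_0 = V\}$. Since $|\mathrm{Gr}(t,n)| \asymp p^{t(n-t)}$, pigeonhole yields some $V$ with $|A_V| \geq |A|/p^{t(n-t)}$; fixing any $a_0 \in A_V$, the translate $B := a_0^{-1} A_V$ lies entirely in the maximal parabolic $P := \mathrm{Stab}(V_0) = U \rtimes L$, where $|U| = p^{t(n-t)}$ and $L = \{\mathrm{diag}(X,Y) : X \in \mathrm{GL}_t,\ Y \in \mathrm{GL}_{n-t},\ \det X \det Y = 1\}$ is the Levi factor.

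Iterating pigeonhole twice more within $P$, once over the $\mathrm{GL}_t$-block (cost $\leq p^{t^2}$) and once over the unipotent coordinate (cost $p^{t(n-t)}$), I extract a further subset $B' \subseteq B$ whose elements all share a common $\mathrm{GL}_t$-block $X_0$ and common unipotent part $Z_0$, varying only in the $\mathrm{GL}_{n-t}$-block $Y$. A direct matrix computation then shows that $B'(B')^{-1}$ is contained in the standard groumvirate $H_0 := \{\mathrm{diag}(I_t, Y) : Y \in \mathrm{SL}_{n-t}\}$, and undoing the initial translation yields $(a_0 B')(a_0 B')^{-1} \subseteq AA^{-1}$ contained in the conjugate groumvirate $H := a_0 H_0 a_0^{-1}$. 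Remarkably, the total pigeonhole loss $p^{t(2n-t)}$ matches $|H|/|G| = p^{-t(2n-t)}$ exactly, so $B'$ has density $|B'|/|H| \geq \mu(A)$ inside $H$.

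The final step upgrades this dense subset of $H$ to all of $H$ at the cost of one more product. Since $\mathrm{SL}_{n-t}(\mathbb{F}_p)$ is Gowers-quasirandom with parameter $D \geq p^{n-t-1}$ (by the Landazuri--Seitz bound on the smallest nontrivial representation), the standard inequality $|X|\,|Y| \geq |H|^2/D \Rightarrow XY = H$, applied with $X = Y = (a_0 B')(a_0 B')^{-1}$, gives $(a_0 B')(a_0 B')^{-1}(a_0 B')(a_0 B')^{-1} \supseteq H$, provided $\mu(A) \geq p^{-(n-t-1)/2}$. Since the left-hand side lies in $AA^{-1}AA^{-1}$, this yields the desired containment in this regime.

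I expect the principal technical obstacle to be selecting a single $t$ that simultaneously satisfies the three competing constraints: positivity of $t$, the quasirandomness threshold $\mu(A) \geq p^{-(n-t-1)/2}$, and the density lower bound $p^{-t(2n-t)} \geq \mu(A)^d$ with a single absolute exponent $d$. Balancing these is especially delicate when $\mu(A)$ is polynomially small in $|G|$, since the quadratic-in-$n$ pigeonhole loss has to be matched by the linear-in-$n$ quasirandomness dimension. Overcoming this probably requires replacing the elementary quasirandomness step with a sharper character-theoretic convolution estimate, or iterating the construction through a chain of nested groumvirates so that each stage only pays a bounded factor in $\mu(A)$.
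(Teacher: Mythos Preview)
This theorem is not proved in the paper; it is quoted as a black box from \cite{evra2024polynomial}, so there is no in-paper argument to compare your attempt against. I will therefore assess your outline on its own terms.

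The gap you flag at the end is genuine and is not a matter of tuning. Writing $\mu(A)=p^{-\alpha}$, your Gowers step inside $H\cong\mathrm{SL}_{n-t}(\mathbb{F}_p)$ needs $\mu(A)$ to beat $D_H^{-1/2}$ with $D_H\asymp p^{\,n-t-1}$, which forces $\alpha<\tfrac{n-t-1}{2}\le \tfrac{n}{2}$. But the regime in which the paper actually invokes this theorem (to derive Corollary~\ref{groumvirate}) is $|A|\ge |G|^{1-c}$ for a \emph{fixed} $c>0$, i.e.\ $\alpha$ of order $c\,n^{2}$. No choice of $t$ bridges a quasirandomness window that is linear in $n$ with a density deficit that is quadratic in $n$; your argument therefore only reaches sets of size $|A|\ge |G|^{1-O(1/n)}$, far short of what is needed. (A secondary slip: the two-set rule ``$|X|\,|Y|\ge |H|^{2}/D\Rightarrow XY=H$'' is false in general---one needs the three-set Babai--Nikolov--Pyber form---but repairing this only shifts the exponent by a constant and does not touch the real obstruction.)

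The actual proof in \cite{evra2024polynomial} is a non-abelian \emph{polynomial Bogolyubov} theorem and does not go through a single pigeonhole-plus-Gowers step; it relies on Fourier analysis on $\mathrm{SL}_n(\mathbb{F}_p)$ together with tensor-rank estimates that control the relevant spectral quantities well beyond what the minimal-degree bound alone supplies. Your closing idea of iterating through nested groumvirates is in the right spirit---it would amount to a density-increment strategy---but securing bounded loss at each stage is exactly where the hard analytic input enters, and elementary quasirandomness is not enough to supply it.
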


For a given $t$, 
\[
\mu\left(\left\{ \begin{pmatrix}I_{t\times t} & 0\\
0 & X
\end{pmatrix}\mid X\in\mathrm{GL}_{n-t}\left(\mathbb{F}_{p}\right)\right\} \right)\leq\frac{q^{\left(n-t\right)^{2}}}{\left|\mathrm{\mathrm{G}L}_{n}\left(\mathbb{F}_{p}\right)\right|}=\frac{q^{\left(n-t\right)^{2}}}{q^{n^{2}}\left(q-1\right)^{n}}\leq q^{\left(n-t\right)^{2}-n^{2}}
\]
Thus, to contain a \emph{groumvirate} on $n-t$ variables, the density
of $A$ must exceed, 
\[
\mu\left(A\right)\geq q^{\frac{\left(n-t\right)^{2}-n^{2}}{d}}
\]
That is 
\[
\left|A\right|\geq q^{\frac{\left(n-t\right)^{2}-n^{2}}{d}}q^{n^{2}}=q^{\left(1-\frac{1}{d}\right)n^{2}+\frac{1}{d}\left(n-t\right)^{2}}
\]
For $t=\varepsilon n$, 
\[
\left|A\right|\geq q^{\left(1-\left(\frac{1-\left(1-\varepsilon\right)^{2}}{d}\right)\right)n^{2}}=q^{\left(1-c_{\varepsilon}\right)n^{2}}=\Omega\left(\left|\mathrm{GL}_{n}\left(\mathbb{F}_{p}\right)\right|^{1-c_{\varepsilon}}\right)
\]
where $c_{\varepsilon}=\frac{1-\left(1-\varepsilon\right)^{2}}{d}$.
The bound is non-trivial if $\varepsilon>0$. For the remainder of
the paper, we take $\varepsilon=\frac{1}{4}$ and $\left|A\right|\geq\left|\mathrm{GL}_{n}\left(\mathbb{F}_{p}\right)\right|^{1-c_{\varepsilon}}$,
and we will often omit explicitly stating this assumption.

In this paper, we assume that the \emph{groumvirate} is expressed
in the standard basis, i.e., $\left\{ \begin{pmatrix}I_{t\times t} & 0\\
0 & X
\end{pmatrix}\mid X\in\mathrm{GL}_{n-t}\left(\mathbb{F}_{p}\right)\right\} =I_{t}\otimes\mathrm{GL}_{n-t}\left(\mathbb{F}_{p}\right)$ for some $t$. Since $A$ is symmetric, $AA^{-1}AA^{-1}=A^{4}$.
We may assume without loss of generality that $I\in A$ (this does
not alter the asymptotics since $I\in AA^{-1}$). 
\begin{cor}
\label{groumvirate}Suppose $\left|A\right|\geq\left|\mathrm{GL}_{n}\left(\mathbb{F}_{p}\right)\right|^{1-c_{\varepsilon}}$,
$A^{4}\supseteq\left\{ \begin{pmatrix}I_{\epsilon n} & 0\\
0 & X
\end{pmatrix}\mid X\in\mathrm{\mathrm{GL}}_{(1-\varepsilon)n}\left(\mathbb{F}_{p}\right)\right\} $, up to a change of basis. 
\end{cor}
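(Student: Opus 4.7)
The plan is to invoke the preceding theorem from \cite{evra2024polynomial} and then verify that the density hypothesis on $A$ forces the parameter of the resulting groumvirate to be at most $n/3$.

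Since $A$ is symmetric, $A^{-1} = A$, so $A^4 = AA^{-1}AA^{-1}$, and the cited theorem produces a groumvirate $H \subseteq A^4$ of density $\mu(H) \geq \mu(A)^d$ in $G = \mathrm{SL}_n(\mathbb{F}_p)$. Let $t^{*}$ denote the parameter of $H$, so that after a suitable change of basis $H$ is the natural copy of $\mathrm{SL}_{n - t^{*}}(\mathbb{F}_p)$. The density computation already spelled out just above the corollary gives $\mu(H) \leq q^{(n - t^{*})^{2} - n^{2}}$. Combining this with the hypothesis $|A| \geq |G|^{1 - 1/(2d)}$, which yields $\mu(A)^{d} \geq |G|^{-1/2} \geq q^{-n^{2}/2}$, I would deduce $(n - t^{*})^{2} \geq n^{2}/2$ and hence $t^{*} \leq (1 - 1/\sqrt{2})\, n < n/3$.

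The final step is to pass from a groumvirate of parameter $t^{*}$ to one of parameter exactly $n/3$ in the same basis. Any element $\begin{pmatrix} I_{n/3} & 0 \\ 0 & Y \end{pmatrix}$ with $Y \in \mathrm{SL}_{2n/3}(\mathbb{F}_p)$ can be rewritten as $\begin{pmatrix} I_{t^{*}} & 0 \\ 0 & X \end{pmatrix}$ for $X = \begin{pmatrix} I_{n/3 - t^{*}} & 0 \\ 0 & Y \end{pmatrix}$, and $\det X = \det Y = 1$ places $X$ in $\mathrm{SL}_{n - t^{*}}(\mathbb{F}_p)$. Undoing the change of basis then produces the desired inclusion in $A^4$.

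There is essentially no obstacle here: the structural theorem of \cite{evra2024polynomial} does the heavy lifting, and the remainder is bookkeeping with exponents. The only mild nuisance is that the hypothesis $|A| \geq |G|^{1 - 1/(2d)}$ is a convenient relaxation of the sharper requirement $|A| \geq |G|^{1 - c_{1/3}}$ with $c_{1/3} = 5/(9d)$ dictated by setting $\varepsilon = 1/3$, but the resulting slack is absorbed painlessly in the inequality $(n - t^{*})^{2} \geq n^{2}/2$.
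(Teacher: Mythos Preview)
Your proposal is correct and follows exactly the approach the paper takes: the corollary is stated immediately after the density computation, and your argument is simply that computation spelled out in the contrapositive direction (bound $t^{*}$ from the density lower bound rather than fix $t$ and read off the required density). The additional observation that a groumvirate of parameter $t^{*}<n/3$ contains the standard groumvirate of parameter $n/3$ is the obvious containment the paper leaves implicit. One terminological quibble: in your final paragraph, the hypothesis $|A|\ge|G|^{1-1/(2d)}$ is a \emph{strengthening}, not a relaxation, of $|A|\ge|G|^{1-5/(9d)}$ (since $1/(2d)<5/(9d)$); this does not affect the mathematics.
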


This groumvirate acts as $\mathrm{GL}_{n-t}\left(\mathbb{F}_{p}\right)$
on a subspace of $\mathbb{F}_{p}^{n}$. Our goal is to upgrade it
to an action on the whole space.

We define the standard basis to be $e_{1}=\begin{pmatrix}1\\
0\\
\vdots\\
0
\end{pmatrix},e_{2}=\begin{pmatrix}0\\
1\\
\vdots\\
0
\end{pmatrix},\dots,e_{n}=\begin{pmatrix}0\\
0\\
\vdots\\
1
\end{pmatrix}$. The span of a set of vectors $v_{1},\dots,v_{k}$ is denoted by
$\left\langle v_{1},\dots,v_{k}\right\rangle $. For a set $S\subseteq[n]$,
denote $e_{S}=\lbrace e_{i}:i\in S\rbrace$ and $e_{\bar{S}}=\lbrace e_{i}:i\notin S\rbrace$.
We define the projection of a vector $v$ onto $\left\langle e_{1},\dots,e_{t}\right\rangle $
by $\pi_{1}\left(v\right)$ and its projection onto $\left\langle e_{t+1},\dots,e_{n}\right\rangle $
by $\pi_{2}\left(v\right)$. Additionally, we can modify the $\pi_{2}$
component of vectors using $M\in A^{4}$. 
\begin{lem}
\label{corr} Let $\lbrace v_{1},\dots,v_{k}\rbrace\subseteq\left\langle e_{t+1},\dots,e_{n}\right\rangle $
and $\lbrace v'_{1},\dots,v'_{k}\rbrace\subseteq\left\langle e_{t+1},\dots,e_{n}\right\rangle $,
for $k\leq n-t$. Assume that $\lbrace v_{1},\dots,v_{k}\rbrace$
is a linearly independent set and $\lbrace v_{1}',\dots,v_{k}'\rbrace$
is also linearly independent; then there exists $M\in A^{4}$ such
that $v_{i}=v'_{i}$, which fixes $\lbrace e_{1},\dots,e_{t}\rbrace$. 
\end{lem}

\begin{proof}
Since $\left\{ v_{1},\dots,v_{k}\right\} \subseteq\left\langle e_{t+1},\dots,e_{n}\right\rangle $
and $\lbrace v_{1}',\dots,v_{k}'\rbrace\subseteq\left\langle e_{t+1},\dots,e_{n}\right\rangle $
are linearly independent sets, there exists an invertible linear transformation
$T:\left\langle e_{t+1},\dots,e_{n}\right\rangle \to\left\langle e_{t+1},\dots,e_{n}\right\rangle $
such that $T\left(v_{i}\right)=v_{i}'$ for every $1\leq i\leq k$.
We can extend $T$ to a transformation $T':\mathbb{F}_{p}^{n}\to\mathbb{F}_{p}^{n}$
such that $T\left(e_{i}\right)=e_{i}$ for every $1\leq i\leq t$.
Define $M$ to be a matrix representation of $T'$ in the standard
basis, that is $M=\left[T'\right]_{\left\langle e_{1},\dots,e_{n}\right\rangle }^{\left\langle e_{1},\dots,e_{n}\right\rangle }$.
By construction, $M\in I_{t}\otimes\mathrm{GL}_{n-t}(\mathbb{F_{p}})\subseteq A^{4}$. 
\end{proof}
Our main contribution is the following Theorem: 
\begin{thm}
\label{useful Theorem} Let $\mathcal{V}=\left\lbrace (U,W)\mid U\oplus W=\mathbb{F}_{p}^{n}\right\rbrace $,
and define an action $\cdot:A\times\mathcal{V}\to\mathcal{V}$ by
$g\cdot(U,W)=(gU,gW)$. Consider the Schreier graph corresponding
to this action on the vertex set $\mathcal{V}$. Then, any two vertices
in $\mathcal{V}'=\left\lbrace (\langle e_{S}\rangle,\langle e_{\bar{S}}\rangle)\mid S\subseteq[n],|S|=t\right\rbrace $
are at most $O(t^{2})$ steps apart in this graph. 
\end{thm}

Intuitively, this allows us to upgrade the action of the groumvirate
to an arbitrary set of size $n-t$ of columns or rows of our choosing. 
\begin{cor}
\label{finalcor} Any transformation $X\in\mathrm{GL}_{n}(\mathbb{F}_{p})$
which fixes $e_{S}$ and is invariant on $\langle e_{\bar{S}}\rangle$
for $|S|=t$, can be constructed in $O(t^{2})$ steps. 
\end{cor}

\section{Upgrading The Groumvirate's Action}
\begin{lem}
\label{lem8}Let $V\subseteq\left\langle e_{t+1},\dots,e_{n}\right\rangle $
be a vector space of dimension $\dim\left(V\right)=k$. Let $\left\{ u_{i}\right\} _{i=1}^{m}\cup\left\{ w_{i}\right\} _{i=1}^{\ell}\subseteq\left\langle e_{t+1},\dots,e_{n}\right\rangle $
be a linearly independent set, where $m+\ell\leq k$. There exists
$T\in A^{4}$ such that $Tu_{i}=u_{i}$ for every $1\leq i\leq m$,
$Tw_{i}\in V$ for every $1\leq i\leq\ell$, and $Te_{i}=e_{i}$ for
every $1\leq i\leq t$. 
\end{lem}

\begin{proof}
Let $\left\{ v_{1},\dots,v_{k}\right\} $ be a basis of $V$. Since
$\dim\left(\left\langle u_{1},\dots,u_{m}\right\rangle \right)=m\leq k$,
there exists a vector $v_{1}\in V$ such that $\dim\left\langle v,u_{1},\dots\right\rangle =m+1$.
We inductively add vectors $v_{1},\dots,v_{k-m}$ until we obtain
a linearly independent set $\left\{ u_{1},\dots,u_{m},v_{1},\dots,v_{k-m}\right\} $.
Applying Lemma \ref{corr}, there exists a transformation such that
$Tu_{i}=u_{i}$ for every $1\leq i\leq m$, $Tw_{i}=v_{i}$ for every
$1\leq i\leq\ell$, and $Te_{i}=e_{i}$ for every $1\leq i\leq t$. 
\end{proof}
\begin{lem}
\label{lemma9}Let $V$ be a vector space and $a\in\mathrm{GL}_{n}\left(\mathbb{F}_{p}\right)$.
Consider the subspace $W\subseteq V$ of $v\in V$ such that $av\in V$,
then $\dim\left(W\right)\geq2\dim\left(V\right)-n$. 
\end{lem}

\begin{proof}
By Grassmann's Lemma, 
\begin{align*}
\dim\left(W\right) & =\dim\left(V\cap a^{-1}V\right)=\dim\left(V\right)+\dim\left(a^{-1}V\right)-\dim\left(V+a^{-1}V\right)\\
 & =2\dim\left(V\right)-\dim\left(V+a^{-1}V\right)\geq2\dim\left(V\right)-n.
\end{align*}
\end{proof}
\begin{lem}
\label{main Lemma}There exists $a\in A^{O(t^{2})}$ such that $ae_{i}=e_{t+i}$
for $1\leq i\leq t$. 
\end{lem}

\begin{proof}[Proof]
\begin{enumerate}
\item We use Lemma \ref{Lemma9} to construct vectors $v_{1},\dots,v_{t}\in\left\langle e_{t+1},\dots,e_{n}\right\rangle $
and $a_{i}\in A^{i}$ and $b_{t}\in A^{O(t^{2})}$ such that $\left\{ \pi_{1}\left(a_{i}v_{i}\right)\right\} _{i=1}^{t}$
is a basis for $\left\langle e_{1},\dots,e_{t}\right\rangle $, $\left\{ \pi_{2}\left(a_{i}v_{i}\right)\right\} _{i=1}^{t}$
is a linearly independent set, and $b_{t}a_{i}v_{i}\in\left\langle e_{t+1},\dots,e_{n}\right\rangle $
for every $1\leq i\leq t$. 
\item We use Lemma \ref{Lemma8} to construct a transformation $c_{t}\in A^{O(t^{2})}$
such that $\left\{ \pi_{2}\left(c_{t}e_{i}\right)\right\} _{i=1}^{t}$
is a linearly independent set. There exist $\left\{ z_{1},\dots z_{t}\right\} \subseteq\left\langle e_{t+1},\dots,e_{n}\right\rangle $
such that $\left\{ \pi_{2}\left(a_{i}v_{i}\right)\right\} _{i=1}^{t}\cup\left\{ z_{1},\dots z_{t}\right\} $
is a basis of $\left\langle e_{t+1},\dots,e_{n}\right\rangle $. By
Lemma \ref{corr}, there exists $T_{1}\in A^{4}$ such that $T\pi_{2}\left(c_{t}e_{i}\right)=z_{i}$
for $1\leq i\leq t$. Hence, we may generally assume that $\left\{ \pi_{2}\left(c_{t}e_{i}\right)\right\} _{i=1}^{t}\cup\left\{ \pi_{2}\left(a_{i}v_{i}\right)\right\} _{i=1}^{t}$
is a linearly independent set. 
\item We express $c_{t}e_{i}$ as $c_{t}e_{i}=\sum_{j=1}^{t}\alpha_{ij}a_{j}v_{j}+w_{i}$
with $\alpha_{ij}\in\mathbb{F}_{p}$ and $w_{i}\in\left\langle e_{t+1},\dots,e_{n}\right\rangle $
(since $\lbrace\pi_{1}(a_{i}v_{i})\rbrace_{i=1}^{t}$ is a basis of
$\left\langle e_{1},\dots,e_{t}\right\rangle $). By construction
$\left\{ \pi_{2}\left(c_{t}e_{i}\right)\right\} _{i=1}^{t}\cup\left\{ \pi_{2}\left(a_{i}v_{i}\right)\right\} _{i=1}^{t}$
is linearly independent. Since elementary operations do not change
the linear independence of a set, $\left\{ \pi_{2}\left(w_{i}\right)\right\} _{i=1}^{t}\cup\left\{ \pi_{2}\left(a_{i}v_{i}\right)\right\} _{i=1}^{t}$
is also linearly independent. By Lemmas \ref{lem8} and \ref{lemma9},
there exists a transformation $T_{2}\in A^{4}$ such that $T_{2}w_{i}=w_{i}'\in b_{t}^{-1}\left\langle e_{t+1},\dots,e_{n}\right\rangle \cap\left\langle e_{t+1},\dots,e_{n}\right\rangle $,
$T_{2}\pi_{2}\left(a_{i}v_{i}\right)=\pi_{2}\left(a_{i}v_{i}\right)$,
and $T_{2}e_{i}=e_{i}$ for $1\leq i\leq t$. 
\item Since $b_{t}a_{j}v_{j}\in\left\langle e_{t+1},\dots,e_{n}\right\rangle $
and $b_{t}w_{i}'\in\left\langle e_{t+1},\dots,e_{n}\right\rangle $,
\[
b_{t}\left(\sum_{j=1}^{t}\alpha_{ij}a_{j}v_{j}+w_{i}'\right)\in\left\langle e_{t+1},\dots,e_{n}\right\rangle .
\]
\item We apply lemma \ref{corr} to construct a transformation $T_{3}\in I_{t}\otimes\mathrm{GL}_{n}(\mathbb{F}_{p})$
such that $T_{3}b_{t}T_{2}T_{1}c_{t}e_{i}=e_{t+i}$ for every $1\leq i\leq t$. 
\end{enumerate}
\end{proof}
\begin{lem}
\label{Lemma9} There exist $\lbrace v_{i}\rbrace_{i=1}^{t}\subseteq\left\langle e_{t+1},\dots,e_{n}\right\rangle $,
$\lbrace a_{i}\rbrace_{i=1}^{t}\subseteq A^{t}$ and $b\in A^{O(t^{2})}$
such that the following hold: 
\begin{enumerate}
\item $\left\{ \pi_{1}\left(a_{i}v_{i}\right)\right\} _{i=1}^{t}$ is a
basis for $\left\langle e_{1},\dots,e_{t}\right\rangle $. 
\item $\left\{ \pi_{2}\left(a_{i}v_{i}\right)\right\} _{i=1}^{t}$ is a
linearly independent set. 
\item $\lbrace ba_{i}v_{i}\rbrace_{i=1}^{t}\subseteq\left\langle e_{t+1},\dots,e_{n}\right\rangle $. 
\end{enumerate}
\end{lem}

\begin{proof}
Part 1 is proven by induction on $t$. Since $A$ is a generating
set, $\left\langle e_{t+1},\dots,e_{n}\right\rangle $ is not invariant
under $A$, thus there exists $v_{1}\in\left\langle e_{t+1},\dots,e_{n}\right\rangle $
and $a_{1}\in A$ such that $\pi_{1}\left(a_{1}v_{1}\right)\neq0$.
For the induction step, since $\left\langle a_{1}v_{1},\dots,a_{i}v_{i},e_{t+1},\dots,e_{n}\right\rangle $
is not invariant under $A$, there exists $v\in\left\langle a_{1}v_{1},\dots,a_{i}v_{i},e_{t+1},\dots,e_{n}\right\rangle $
and $a_{i+1}\in A$ such that $a_{i+1}v\notin\left\langle a_{1}v_{1},\dots,a_{i}v_{i},e_{t+1},\dots,e_{n}\right\rangle $.
Therefore, there exists $v_{i+1}=e_{i}$ for some $t+1\leq i\leq n$
or $v_{i+1}=a_{j}v_{j}\in A^{j}\left\langle e_{t+1},\dots,e_{n}\right\rangle $
such that $a_{i+1}v_{i+1}\notin\left\langle a_{1}v_{1},\dots,a_{i}v_{i},e_{t+1},\dots,e_{n}\right\rangle $,
therefore $a_{i+1}v_{i+1}\in A^{i+1}\left\langle e_{t+1},\dots,e_{n}\right\rangle $.
Hence, $\left\{ \pi_{1}\left(a_{i}v_{i}\right)\right\} _{i=1}^{t}$
is a linearly independent set.

Part 2 is proven by induction. By Lemma \ref{Lemma9},

\[
\dim\left(a_{i}^{-1}\left\langle e_{t+1},\dots,e_{n}\right\rangle \cap\left\langle e_{t+1},\dots,e_{n}\right\rangle \right)\geq t
\]
for every $1\leq i\leq t$.

Thus, there exists $u_{1}\in\left\langle e_{t+1},\dots,e_{n}\right\rangle $
such that $\pi_{2}(a_{1}u_{1})\neq0$, set $v_{1}'=v_{1}+u_{1}$.
Assume we have constructed $v_{1}',\dots,v_{i}'$ such that $\lbrace\pi_{2}(a_{j}v_{j}')\rbrace_{j=1}^{i}$
is a linearly independent set. For the induction step, if $\pi_{2}(a_{i+1}v_{i+1})$
is linearly independent of $\lbrace\pi_{2}(a_{j}v_{j}')\rbrace_{j\leq i}$,
set $v_{i+1}'=v_{i+1}$. Otherwise, set $v_{i+1}'=v_{i+1}+u_{i+1}$
where $u_{i+1}\in a_{i+1}^{-1}\left\langle e_{t+1},\dots,e_{n}\right\rangle \cap\left\langle e_{t+1},\dots,e_{n}\right\rangle $
and $\pi_{2}(a_{i+1}u_{i+1})$ is linearly independent of $\lbrace\pi_{2}(a_{j}v_{j}')\rbrace_{j\leq i}$.
We replace $\lbrace v_{1},\dots,v_{t}\rbrace$ with $\lbrace v_{1}',\dots,v_{t}'\rbrace$
and proceed to the last part.

For Part 3, the construction of $b=b_{t}$ is done inductively, where
the induction is on the number of vectors $\lbrace a_{1}v_{1},\dots,a_{i}v_{i}\rbrace$
such that $\lbrace b_{i}a_{1}v_{1},\dots,b_{i}a_{i}v_{i}\rbrace\subseteq\left\langle e_{t+1},\dots,e_{n}\right\rangle $.
Initially, we are given a set $\left\{ a_{1}v_{1},\dots,a_{t}v_{t}\right\} $
where $\lbrace a_{1}v_{1},\dots,a_{t}v_{t}\rbrace\cap\left\langle e_{t+1},\dots,e_{n}\right\rangle =\emptyset$
and $\left\{ \pi_{2}\left(a_{i}v_{i}\right)\right\} _{i=1}^{t}$ is
linearly independent. For the base case, we use Lemma \ref{corr}
to add $w_{i}\in a_{1}\left\langle e_{t+1},\dots,e_{n}\right\rangle \cap\left\langle e_{t+1},\dots,e_{n}\right\rangle $
to $a_{i}v_{i}$ for every $1\le i\leq t$, where $\left\{ w_{i}\right\} _{i=1}^{t}$
are chosen such that $\left\{ \pi_{2}\left(a_{1}^{-1}\left(a_{i}v_{i}+w_{i}\right)\right)\right\} _{i=1}^{t}$
is a linearly independent set. After adding $\left\{ w_{i}\right\} _{i=1}^{t}$,
we apply $a_{1}^{-1}$ to all the vectors. We are left with the set
\[
\left\{ v_{1}+a_{1}^{-1}w_{1},a_{1}^{-1}a_{2}v_{2}+a_{1}^{-1}w_{2},\dots,a_{1}^{-1}a_{t}v_{t}+a_{1}^{-1}w_{t}\right\} 
\]
We define $d_{1}=v_{1}+a_{1}^{-1}w_{1}\in\left\langle e_{t+1},\dots,e_{n}\right\rangle $.
Note that $\left\{ \pi_{2}\left(a_{1}^{-1}\left(a_{i}v_{i}+w_{i}\right)\right)\right\} _{i=1}^{t}$
is a linearly independent set.

Now assume we are given 
\[
\left\{ d_{1},\dots,d_{i},a_{i}^{-1}a_{i+1}v_{i+1}+w_{i+1},\dots,a_{i}^{-1}a_{t}v_{t}+w_{t}\right\} 
\]
where $d_{1},\dots,d_{i},w_{i+1},\dots,w_{t}\in\left\langle e_{t+1},\dots,e_{n}\right\rangle $
and 
\[
\left\{ \pi_{2}\left(d_{1}\right),\dots,\pi_{2}\left(d_{i}\right)\right\} \cup\left\{ \pi_{2}\left(a_{i}^{-1}a_{i+1}v_{i+1}+w_{i+1}\right),\dots,\pi_{2}\left(a_{i}^{-1}a_{t}v_{t}+w_{t}\right)\right\} 
\]
is a linearly independent set. By Lemma \ref{lem8} and \ref{lemma9},
there exists a transformation $M\in I_{t}\otimes\mathrm{GL}_{n-t}\left(\mathbb{F}_{p}\right)$
such that $Md_{j}\in a_{i}^{-1}a_{i+1}\left\langle e_{t+1},\dots,e_{n}\right\rangle \cap\left\langle e_{t+1},\dots,e_{n}\right\rangle $
for every $1\leq i\leq j$ such that $M\pi_{2}\left(a_{i}^{-1}a_{i+1}v_{j}+w_{i+1}\right)=\pi_{2}\left(a_{i}^{-1}a_{i+1}v_{j}+w_{i+1}\right)$.
\textit{This is where the $t\leq\frac{n}{4}$ assumption comes into
play.}

With some abuse of notation, we assume $d_{1},\dots,d_{i}$ are already
in $a_{i}^{-1}a_{i+1}\left\langle e_{t+1},\dots,e_{n}\right\rangle \cap\left\langle e_{t+1},\dots,e_{n}\right\rangle $.
Now we add $\left\{ w_{j}'\right\} _{j=1}^{t}\in a_{i}^{-1}a_{i+1}\left\langle e_{t+1},\dots,e_{n}\right\rangle \cap\left\langle e_{t+1},\dots,e_{n}\right\rangle $
to all the vectors such that 
\begin{align*}
\left\{ \pi_{2}\left(a_{i+1}^{-1}a_{i}\left(d_{j}+w_{j}'\right)\right)\right\} _{j=1}^{i}\cup & \left\{ \pi_{2}\left(a_{i+1}^{-1}a_{i}\left(a_{i}^{-1}a_{j}v_{j}+w_{j}+w_{j}'\right)\right)\right\} _{j=i+1}^{t}
\end{align*}
is a linearly independent set. We set $w_{j}''\leftarrow a_{i+1}^{-1}a_{i}\left(w_{j}+w_{j}'\right)$,
and $d_{j}\leftarrow d_{j}+w_{j}''$. Thus $d_{j}\in\left\langle e_{t+1},\dots,e_{n}\right\rangle $
and 
\[
a_{i+1}^{-1}a_{i}\left(a_{i}^{-1}a_{i+1}v_{i+1}+w_{j}+w_{j}'\right)=v_{i+1}+w_{j}''\in\left\langle e_{t+1},\dots,e_{n}\right\rangle 
\]
and for $j\leq i+1\leq n$, 
\[
a_{i+1}^{-1}a_{i}\left(a_{i}^{-1}a_{j}v_{j}+w_{j}+w_{j}'\right)=a_{i+1}^{-1}a_{j}v_{j}+w_{j}''.
\]
We set $d_{i+1}=v_{i+1}+w_{j}''$, and proceed inductively. As each
step takes $O\left(t\right)$ operations in $A$ and there are $t$
steps, the number of operations required to construct $b$ is $O\left(t^{2}\right)$. 
\end{proof}
\begin{lem}
\label{Lemma8}There exists $c_{t}\in A^{O\left(t^{2}\right)}$ such
that $\left\{ \pi_{2}\left(c_{t}e_{i}\right)\right\} _{i=1}^{t}$
is a linearly independent set. 
\end{lem}

\begin{proof}
We prove this lemma by induction on $i$, where $i$ is the maximum
number such that $\left\{ \pi_{2}\left(c_{i}e_{j}\right)\right\} _{j=1}^{i}$
is linearly independent. For the base case $i=1$, since $A$ is a
generating set, $\dim\left(A^{t}\left\langle e_{1}\right\rangle \right)>t$,
there exists $c_{1}\in A^{t}$ such that $\pi_{2}\left(c_{1}e_{1}\right)\neq0$.

For the induction step, assume we have constructed $c_{i}$ such that
$\left\{ \pi_{2}\left(c_{i}e_{j}\right)\right\} _{j=1}^{i}$ is linearly
independent. If $\pi_{2}\left(c_{i}e_{i+1}\right)$ is linearly independent
of $\left\{ \pi_{2}\left(c_{i}e_{j}\right)\right\} _{j=1}^{i}$, then
we are done. Otherwise, $\pi_{2}\left(c_{i}e_{i+1}\right)=\sum_{j=1}^{i}\alpha_{j}\pi_{2}\left(c_{i}e_{j}\right)$
for some $\left\{ \alpha_{j}\right\} _{j=1}^{i}\in\mathbb{F}_{p}^{n}$.
As $c_{i}e_{i+1}$ is linearly independent of $\left\{ c_{i}e_{j}\right\} _{j=1}^{i}$,
$\pi_{1}\left(c_{i}e_{i+1}\right)\neq\pi_{1}\left(c_{i}\sum_{j=1}^{i}\alpha_{j}e_{j}\right)$.

Let $z=\pi_{1}\left(c_{i}e_{i+1}\right)-\pi_{1}\left(c_{i}\sum_{j=1}^{i}\alpha_{j}e_{j}\right)$.
Since $\dim\left(A^{t}\left\langle z\right\rangle \right)\geq t$,
there exists $\tilde{c}_{i+1}\in A^{t}$ such that $\pi_{2}\left(\tilde{c}_{i+1}z\right)\neq0$.
Consider the vector space $V\subseteq\left\langle e_{t+1},\dots,e_{n}\right\rangle $
spanned by the set $\left\{ \pi_{2}\left(\tilde{c}_{i+1}e_{1}\right),\dots,\pi_{2}\left(\tilde{c}_{i+1}e_{t}\right)\right\} $
which has a basis $\left\{ v_{1},\dots,v_{\ell}\right\} $.

Let $W=\left\langle e_{t+1},\dots,e_{n}\right\rangle \cap\tilde{c}_{i+1}^{-1}\left\langle e_{t+1},\dots,e_{n}\right\rangle $,
by Lemma \ref{lemma9}, $\dim\left(W\right)\geq2n-t\geq2t$. So there
exist $\left\{ w_{1},\dots,w_{i}\right\} \subseteq W$ such that $\left\{ v_{1},\dots,v_{\ell},w_{1},\dots,w_{i}\right\} \subseteq\left\langle e_{t+1},\dots,e_{n}\right\rangle $
is a linearly independent set.

Let $M\in I_{t}\otimes\mathrm{GL}_{n-t}\left(\mathbb{F}_{p}\right)$
be a linear transformation such that $M\pi_{2}\left(c_{i}e_{j}\right)=\tilde{c}_{i+1}^{-1}w_{j}$
for every $1\leq j\leq i$. For $1\leq j\leq i$, 
\begin{align*}
\pi_{2}\left(\tilde{c}_{i+1}Mc_{i}e_{j}\right) & =\pi_{2}\left(\tilde{c}_{i+1}\left(\pi_{1}\left(Mc_{i}e_{j}\right)+\pi_{2}\left(Mc_{i}e_{j}\right)\right)\right)\\
 & =\pi_{2}\tilde{c}_{i+1}\pi_{1}Mc_{i}e_{j}+\pi_{2}\tilde{c}_{i+1}\pi_{2}Mc_{i}e_{j}
\end{align*}
Since $M\in I_{t}\otimes\mathrm{GL}_{n-t}\left(\mathbb{F}_{p}\right)$,
\[
\pi_{2}\left(\tilde{c}_{i+1}Mc_{i}e_{j}\right)=\pi_{2}\tilde{c}_{i+1}\pi_{1}c_{i}e_{j}+\pi_{2}\tilde{c}_{i+1}\pi_{2}Mc_{i}e_{j}.
\]
Since $M\pi_{2}c_{i}e_{j}=\tilde{c}_{i+1}^{-1}w_{j}$, 
\begin{align*}
\pi_{2}\left(\tilde{c}_{i+1}Mc_{i}e_{j}\right) & =\pi_{2}\tilde{c}_{i+1}\pi_{1}c_{i}e_{j}+\pi_{2}w_{j}\\
 & =\pi_{2}\tilde{c}_{i+1}\pi_{1}c_{i}e_{j}+w_{j}\in V\oplus W
\end{align*}
On the other hand, 
\[
\pi_{2}\left(\tilde{c}_{i+1}Mc_{i}e_{i+1}\right)=\pi_{2}\tilde{c}_{i+1}\pi_{1}c_{i}e_{i+1}+\sum_{j=1}^{i}\alpha_{j}w_{j}\in V\oplus W
\]
Since $w_{j}$ are linearly independent and $V\cap W=\left\{ 0\right\} $
then $\left\{ \pi_{2}\left(\tilde{c}_{i+1}Mc_{i}e_{j}\right)\right\} _{j=1}^{i}$
is a linearly independent set.

Suppose towards contradiction that $\pi_{2}\left(\tilde{c}_{i+1}Mc_{i}e_{i+1}\right)$
is linearly dependent on $\left\{ \pi_{2}\left(\tilde{c}_{i+1}Mc_{i}e_{j}\right)\right\} _{j=1}^{i}$,
then there exist $\left\{ \beta_{j}\right\} _{j=1}^{i}\subseteq\mathbb{F}_{p}$
such that 
\[
\pi_{2}\left(\tilde{c}_{i+1}Mc_{i}e_{i+1}\right)=\sum_{j=1}^{i}\beta_{j}\pi_{2}\left(\tilde{c}_{i+1}Mc_{i}e_{j}\right)
\]
In particular, since $V\oplus W$ is a direct sum of vector spaces
\[
\sum_{j=1}^{i}\beta_{j}w_{j}=\sum_{j=1}^{i}\alpha_{j}w_{j}
\]
thus $\beta_{j}=w_{j}$. On the other hand, 
\[
\pi_{2}\tilde{c}_{i+1}\pi_{1}c_{i}e_{i+1}=\sum_{j=1}^{i}\beta_{j}\pi_{2}\tilde{c}_{i+1}\pi_{1}c_{i}e_{j}
\]
thus 
\begin{align*}
0 & =\pi_{2}\left(\tilde{c}_{i+1}\pi_{1}c_{i}e_{i+1}-\sum_{j=1}^{i}\beta_{j}\tilde{c}_{i+1}\pi_{1}c_{i}e_{j}\right)\\
 & =\pi_{2}\left(\tilde{c}_{i+1}\pi_{1}c_{i}e_{i+1}-\sum_{j=1}^{i}\alpha_{j}\tilde{c}_{i+1}\pi_{1}c_{i}e_{j}\right)\\
 & =\pi_{2}\left(\tilde{c}_{i+1}z\right)\neq0
\end{align*}
So for $c_{i+1}=\tilde{c}_{i+1}Mc_{i}$, the set $\left\{ \pi_{2}\left(c_{i+1}e_{j}\right)\right\} _{j=1}^{i+1}$
is linearly independent. As every iteration takes $O(t)$ steps and
there are $t$ such iterations, it takes $O(t^{2})$ steps to construct
$c_{t}$. 
\end{proof}
Our goal is to modify $a$ so that in addition to $ae_{i}=e_{t+i}$
for $1\leq i\leq t$, we will also have $ae_{t+i}=e_{i}$. To achieve
this, we first prove the following lemma, 
\begin{lem}
\label{lema} Let $A\in\mathrm{Mat}_{t\times t}\left(\mathbb{F}_{p}\right)$
and $B\in\mathrm{Mat}_{t\times n-2t}\left(\mathbb{F}_{p}\right)$,
where $\begin{pmatrix}A & B\end{pmatrix}$ has full column rank; then
there exists $J\in\mathrm{Mat}_{n-t\times t}\left(\mathbb{F}_{p}\right)$
such that $A+BJ$ has full column rank. 
\end{lem}

\begin{proof}
We prove by induction on $t-\dim\left(\mathrm{col}\left(A\right)\right)$.
If $\dim\left(\mathrm{col}\left(A\right)\right)=t$, then we are done.
Otherwise, there exists $A_{i}$ such that $A_{i}\in\mathrm{Span}\left\{ A_{j}\right\} _{j\neq i}$;
without loss of generality, assume $i=1$. Since $\dim\left(\mathrm{col}\left(A\right)\right)<t$,
there exists $B_{k}$ for $1\leq k\leq n-2t$ such that $\dim\left(\mathrm{col}\left(A\right)+\left\langle B_{k}\right\rangle \right)=\dim\left(\mathrm{col}\left(A\right)\right)+1$.
Thus for $J=E_{k1}$, 
\[
\dim\mathrm{col}\begin{pmatrix}A_{1}+B_{k} & A_{2} & \dots & A_{t}\end{pmatrix}=\dim\mathrm{col}\left(A+BJ\right)=\dim\left(\mathrm{col}\left(A\right)\right)+1.
\]
\end{proof}
\begin{lem}[Swapping Lemma]
\label{swapping_Lemma} 
\[
A_{\mathrm{swap}}=\begin{pmatrix}0 & I_{t\times t} & 0\\
I_{t\times t} & 0 & 0\\
0 & 0 & I_{n-2t\times n-2t}
\end{pmatrix}\in A^{O\left(t^{2}\right)}
\]
\end{lem}

\begin{proof}
In Lemma \ref{main Lemma}, we constructed $a\in A^{O(t^{2})}$ which
such that $ae_{i}=e_{t+i}$ for every $1\leq i\leq t$. Thus the matrix
$a$ is of the form 
\[
a=\begin{pmatrix}0_{t\times t} & A_{t\times t} & B_{t\times n-2t}\\
I_{t\times t} & C_{t\times t} & D_{t\times n-2t}\\
0_{n-2t\times t} & E_{n-2t\times t} & F_{n-2t\times n-2t}
\end{pmatrix}
\]
where $\begin{pmatrix}A & B\end{pmatrix}$ has full column rank and
$\begin{pmatrix}E & F\end{pmatrix}$ also has full column rank. Since
$\begin{pmatrix}A & B\end{pmatrix}$ has full column rank there exists
$X_{1}\in\mathrm{Mat}_{n-t\times t}\left(\mathbb{F}_{p}\right)$ such
that $A+BX_{1}$ has rank $t$. Applying $\begin{pmatrix}I_{t} & 0 & 0\\
0 & I_{t} & 0\\
0 & X_{1} & I_{n-2t}
\end{pmatrix}$ from the right 
\begin{align*}
a & \leftarrow\begin{pmatrix}0_{t\times t} & A_{t\times t}+B_{t\times n-2t}X_{1} & B_{t\times n-2t}\\
I_{t\times t} & C_{t\times t}+D_{t\times n-2t}X_{1} & D_{t\times n-2t}\\
0_{n-2t\times t} & E_{n-2t\times t}+F_{n-2t\times n-2t}X_{1} & F_{n-2t\times n-2t}
\end{pmatrix}\\
 & =\begin{pmatrix}0_{t\times t} & A_{t\times t}' & B_{t\times n-2t}\\
I_{t\times t} & C_{t\times t}' & D_{t\times n-2t}\\
0_{n-2t\times t} & E_{n-2t\times t}' & F_{n-2t\times n-2t}
\end{pmatrix}
\end{align*}
Now $A_{t\times t}'$ is full rank. We apply $\begin{pmatrix}I_{t\times t} & 0 & 0\\
0 & \left(A_{t\times t}'\right)^{-1} & 0\\
0 & 0 & I_{n-2t\times n-2t}
\end{pmatrix}$ from the right. 
\[
a\leftarrow\begin{pmatrix}0_{t\times t} & I_{t\times t} & B_{t\times n-2t}\\
I_{t\times t} & C_{t\times t}'\left(A_{t\times t}'\right)^{-1} & D_{t\times n-2t}\\
0_{n-2t\times t} & E_{n-2t\times t}'\left(A_{t\times t}'\right)^{-1} & F_{n-2t\times n-2t}
\end{pmatrix}
\]
renaming 
\begin{align*}
C'' & =C'A'^{-1}\\
E'' & =E'A'^{-1}.
\end{align*}
We have 
\[
a\leftarrow\begin{pmatrix}0_{t\times t} & I_{t\times t} & B_{t\times n-2t}\\
I_{t\times t} & C_{t\times t}'' & D_{t\times n-2t}\\
0_{n-2t\times t} & E_{n-2t\times t}'' & F_{n-2t\times n-2t}
\end{pmatrix}
\]
Now we apply 
\[
\begin{pmatrix}I_{t\times t} & 0 & 0\\
0 & I_{t\times t} & -B_{t\times n-2t}'\\
0 & 0 & I_{n-2t\times n-2t}
\end{pmatrix}
\]
from the right. 
\begin{align*}
a & \leftarrow\begin{pmatrix}0_{t\times t} & I_{t\times t} & B_{t\times n-2t}-B_{t\times n-2t}\\
I_{t\times t} & C_{t\times t}'' & D_{t\times n-2t}-C_{t\times t}''B_{t\times n-2t}\\
0_{n-2t\times t} & E_{n-2t\times t}'' & F_{n-2t\times n-2t}-E_{t\times t}''B_{t\times n-2t}
\end{pmatrix}\\
 & =\begin{pmatrix}0_{t\times t} & I_{t\times t} & 0_{t\times t}\\
I_{t\times t} & C_{t\times t}'' & D_{t\times n-2t}-C_{t\times t}''B_{t\times n-2t}\\
0_{n-2t\times t} & E_{n-2t\times t}'' & F_{n-2t\times n-2t}-E_{t\times t}''B_{t\times n-2t}
\end{pmatrix}
\end{align*}
As $Y=\begin{pmatrix}I_{t\times t} & D_{t\times n-2t}-C_{t\times t}''B_{t\times n-2t}\\
0_{n-2t\times t} & F_{n-2t\times n-2t}-E_{t\times t}''B_{t\times n-2t}
\end{pmatrix}$ has full row rank we can apply $\begin{pmatrix}I & 0\\
0 & Y_{n-t\times n-2t}^{-1}
\end{pmatrix}$ from the left to get 
\[
a\leftarrow\begin{pmatrix}0_{t\times t} & I_{t\times t} & 0_{t\times t}\\
I_{t\times t} & X & 0_{t\times n-2t}\\
0_{n-2t\times t} & Z & I_{n-2t\times n-2t}
\end{pmatrix}
\]
from some $X$ and $Z$. Apply 
\[
\begin{pmatrix}I_{t} & 0 & 0\\
0 & I_{t} & 0\\
0 & -Z & I_{n-2t}
\end{pmatrix}
\]
from the right to obtain 
\[
a\leftarrow\begin{pmatrix}0_{t\times t} & I_{t\times t} & 0_{t\times t}\\
I_{t\times t} & X & 0_{t\times n-2t}\\
0_{n-2t\times t} & 0 & I_{n-2t\times n-2t}
\end{pmatrix}
\]
Restricting our focus on the first $3t$ indices of the matrix 
\[
\tilde{a}=\begin{pmatrix}0 & I_{t\times t} & 0\\
I_{t\times t} & X & 0\\
0 & 0 & I_{t\times t}
\end{pmatrix}
\]
as we could construct the matrix $\tilde{a}$, we can also construct
$\tilde{a}^{-1}$ by applying the inverse transformations in reverse
order 
\[
\tilde{a}^{-1}=\begin{pmatrix}-X & I_{t\times t} & 0\\
I_{t\times t} & 0 & 0\\
0 & 0 & I_{t\times t}
\end{pmatrix}.
\]
Conjugating the constructable matrix (as it is in the groumvirate)
\[
b=\begin{pmatrix}I_{t\times t} & 0 & 0\\
0 & I_{t\times t} & -X\\
0 & 0 & I_{t\times t}
\end{pmatrix}
\]
by the matrix $\tilde{a}^{-1}$, 
\[
\tilde{a}^{-1}b\tilde{a}=\begin{pmatrix}I_{t\times t} & 0 & -X\\
0 & I_{t\times t} & 0\\
0 & 0 & I_{t\times t}
\end{pmatrix}.
\]
Applying a permutation matrix $P$ from both sides which swaps the
$\left\{ t+1,\dots,2t\right\} $ columns with the $\left\{ 2t+1,\dots,3t\right\} $,
we get a matrix 
\[
P\tilde{A}^{-1}B\tilde{A}P=\begin{pmatrix}I_{t\times t} & -X & 0\\
0 & I_{t\times t} & 0\\
0 & 0 & I_{t\times t}
\end{pmatrix}
\]
and 
\[
\tilde{A}P\tilde{A}^{-1}B\tilde{A}P=\begin{pmatrix}0 & I_{t\times t} & 0\\
I_{t\times t} & X & 0\\
0 & 0 & I_{t\times t}
\end{pmatrix}\begin{pmatrix}I_{t\times t} & -X & 0\\
0 & I_{t\times t} & 0\\
0 & 0 & I_{t\times t}
\end{pmatrix}=\begin{pmatrix}0 & I_{t\times t} & 0\\
I_{t\times t} & 0 & 0\\
0 & 0 & I_{t\times t}
\end{pmatrix}
\]
which is the desired swap matrix. 
\end{proof}
\begin{proof}[Proof of Theorem \ref{useful Theorem}]
By Lemma \ref{swapping_Lemma}, the matrix $A_{\mathrm{swap}}$ can
be constructed in $O\left(t^{2}\right)$ steps. Combining the swap
transformation and any permutation on the last $n-t$ vectors, we
obtain a transitive action on $V'$. The distance between any two
vertices in $V'$ is $O(t^{2})$ since each step in the proof requires
at most $O(t^{2})$ multiplications. 
\end{proof}

\section{Constructing the Matrix Group via Its Bruhat Decomposition}

Our proof relies on the following decomposition of $\mathrm{GL}_{n}\left(\mathbb{F}_{p}\right)$
known in the literature as the Bruhat decomposition (a reference for
the case $\mathrm{SL}_{n}\left(\mathbb{F}_{p}\right)$ is proven in
\cite{bruhat}). 
\begin{thm}
$\mathrm{GL}_{n}\left(\mathbb{F}_{p}\right)=BWB$ where $B\subseteq\mathrm{GL}_{n}\left(\mathbb{F}_{p}\right)$
is the group of lower-triangular matrices and $W\subseteq\mathrm{GL}_{n}\left(\mathbb{F}_{p}\right)$
is the set of monomial matrices (the set of matrices in which each
row and column contains exactly one non-zero element). 
\end{thm}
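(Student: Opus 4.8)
The plan is to prove the equivalent assertion that every $g\in\mathrm{SL}_{n}\left(\mathbb{F}_{p}\right)$ can be reduced to a monomial matrix by left- and right-multiplication by lower-triangular matrices. Unwinding the definitions: if $b$ is lower-triangular then the $i$-th row of $bg$ is $b_{ii}$ times the $i$-th row of $g$ plus a linear combination of rows strictly above it, so left-multiplication by a lower-triangular matrix performs row operations of the type ``add a multiple of row $i$ to row $k$'' with $i<k$, together with rescalings; dually, right-multiplication by a lower-triangular matrix performs column operations of the type ``add a multiple of column $j'$ to column $j$'' with $j<j'$, together with rescalings. Since a product of lower-triangular matrices is again lower-triangular, reducing $g$ to a monomial matrix $w$ this way yields $b_{1},b_{2}\in B$ with $b_{1}^{-1}gb_{2}^{-1}=w$, i.e. $g=b_{1}wb_{2}\in BWB$.

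First I would run a Gaussian-style elimination, organised as an induction on $n$. The last column of $g$ is nonzero since $g$ is invertible; let $r$ be the \emph{smallest} index with $g_{r,n}\neq0$ (choosing the top-most nonzero entry is what makes the clearing move downward). For each $k>r$, add the suitable multiple of row $r$ to row $k$ to kill $g_{k,n}$; since $r<k$ this is a unipotent lower-triangular row operation, and afterwards the last column is $g_{r,n}e_{r}$. Next, for each $j<n$, add the suitable multiple of column $n$ to column $j$ to kill $g_{r,j}$; since $n>j$ this is a unipotent lower-triangular column operation, and afterwards row $r$ is $g_{r,n}e_{n}^{\top}$. Now row $r$ and column $n$ are \emph{isolated}: each has a single nonzero entry, at position $(r,n)$.

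Then I would recurse on the matrix obtained by deleting row $r$ and column $n$; expanding the determinant along column $n$ shows $\det g$ equals $\pm g_{r,n}$ times the determinant of this submatrix, so it is invertible. The point to verify is that every operation used in the recursion lifts to a lower-triangular operation on the full matrix and does not disturb the isolated data: a recursion row operation ``add a multiple of row $i$ to row $k$'' with $i<k$ and $i,k\neq r$ leaves column $n$ fixed because row $i$ vanishes there, and a recursion column operation ``add a multiple of column $j'$ to column $j$'' with $j<j'\le n-1$ leaves row $r$ fixed because column $j'$ vanishes there (and inductively the same holds for all earlier isolated rows and columns). By induction the submatrix reduces to a monomial matrix, hence so does $g$; call the result $w$. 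No rescalings were ever needed, so $b_{1},b_{2}$ are unipotent lower-triangular, whence $\det w=\det g=1$ and $w\in W$, giving $g\in BWB$. The reverse inclusion $BWB\subseteq\mathrm{SL}_{n}\left(\mathbb{F}_{p}\right)$ is immediate from $B,W\subseteq\mathrm{SL}_{n}\left(\mathbb{F}_{p}\right)$.

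The step I expect to require the most care is not any single deduction but the triangularity bookkeeping: one must consistently pick the pivot in each column to be its top-most nonzero entry and the isolating entry in each row to be its right-most, so that clearing always proceeds downward and rightward and hence corresponds to \emph{lower}-triangular elementary matrices, and then check inductively that the recursion never re-populates a row or column that has already been isolated. Both of these reduce to the elementary vanishing observations recorded above.
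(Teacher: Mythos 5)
Your proposal is correct. Note, however, that the paper does not prove this theorem at all: it is quoted as a known result (the Bruhat decomposition) with a citation to the lecture notes of Joseph, so there is no in-paper argument to compare against. Your argument is the standard self-contained proof: interpret left (resp.\ right) multiplication by unipotent lower-triangular matrices as the row operations ``add a multiple of row $i$ to row $k$, $i<k$'' (resp.\ the column operations ``add a multiple of column $j'$ to column $j$, $j<j'$''), isolate the top-most pivot of the last column, clear downward and then leftward, and recurse on the complementary minor. The two points that need care are exactly the ones you verified: choosing the top-most nonzero entry of the last column as pivot so that all clearing moves go downward/leftward (hence stay lower-triangular), and the bookkeeping that recursion operations never repopulate an already isolated row or column, which follows from the vanishing of the source row in the isolated columns and of the isolated rows in the source columns. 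Since all elementary matrices used are unipotent, $b_{1},b_{2}$ have determinant $1$, so the resulting monomial matrix also lies in $\mathrm{SL}_{n}\left(\mathbb{F}_{p}\right)$, and $g=b_{1}wb_{2}\in BWB$; the reverse inclusion is trivial. In short: your proof fills in a step the paper only cites, and it does so correctly by the standard Gaussian-elimination route.
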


To construct a matrix $M\in\mathrm{GL}_{n}\left(\mathbb{F}_{p}\right)$,
we construct its Bruhat decomposition $b_{1},b_{2}\in B$ and $w\in W$
and we compose all elements to get $M=b_{1}wb_{2}$. 
\begin{lem}
The triangular matrices can be constructed in $O\left(n^{2}\right)$
steps, i.e., $B\subseteq A^{O\left(n^{2}\right)}$. 
\end{lem}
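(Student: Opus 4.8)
The plan is to write every $b\in B$ as a product of a bounded number of matrices, each lying in (a conjugate of) an upgraded groumvirate and hence, by Theorem~\ref{useful theorem} (combined with Corollary~\ref{swapping_lemma} in one case), in $A^{O(n^2)}$; a product of $O(1)$ such factors is then in $A^{O(n^2)}$, giving $B\subseteq A^{O(n^2)}$. Fix $t=\lfloor n/3\rfloor$ and split $\{1,\dots,n\}$ into consecutive blocks $I_1=\{1,\dots,t\}$, $I_2$, $I_3$ with $|I_2|=t$, $|I_3|=n-2t$ (for readability I argue as if $3\mid n$; the general case only perturbs the block sizes). In the induced $3\times 3$ block notation, Theorem~\ref{useful theorem} — applied with privileged block $I_1$ and second active block $I_2$, resp.\ $I_3$ — shows that every matrix of shape $\left(\begin{smallmatrix}\ast&\ast&0\\ \ast&\ast&0\\ 0&0&I\end{smallmatrix}\right)\in\mathrm{SL}_n(\mathbb{F}_{p})$ (\emph{type A}) and every matrix of shape $\left(\begin{smallmatrix}\ast&0&\ast\\ 0&I&0\\ \ast&0&\ast\end{smallmatrix}\right)\in\mathrm{SL}_n(\mathbb{F}_{p})$ (\emph{type B}) lies in $A^{O(t^2)}=A^{O(n^2)}$. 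Since $A_{\mathrm{swap}}\in A^{O(n^2)}$ by Corollary~\ref{swapping_lemma} and conjugation by $A_{\mathrm{swap}}$ interchanges the blocks $I_1\leftrightarrow I_2$ while preserving determinants, every matrix of shape $\left(\begin{smallmatrix}I&0&0\\ 0&\ast&\ast\\ 0&\ast&\ast\end{smallmatrix}\right)\in\mathrm{SL}_n(\mathbb{F}_{p})$ (\emph{type C}), being $A_{\mathrm{swap}}^{-1}(\text{type B})A_{\mathrm{swap}}$, also lies in $A^{O(n^2)}$.

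Given $b\in B$, factor $b=ud$ with $d=\mathrm{diag}(d_1,\dots,d_n)$ (so $\prod_i d_i=1$) and $u$ lower unitriangular. The diagonal part splits as $d=d'd''$, where $d''$ agrees with $d$ on the block $I_3$, equals $\big(\prod_{i\in I_3}d_i\big)^{-1}$ in position $1$, and equals $1$ elsewhere — a determinant-one matrix of type B — while $d'=d(d'')^{-1}$ is supported on $I_1\cup I_2$ with determinant one, hence of type A. For the unitriangular part use the block identity
\[
u=\begin{pmatrix} I&0&0\\ 0&I&0\\ u_{31}u_{11}^{-1}&0&I\end{pmatrix}\begin{pmatrix} u_{11}&0&0\\ u_{21}&u_{22}&0\\ 0&0&I\end{pmatrix}\begin{pmatrix} I&0&0\\ 0&I&0\\ 0&u_{32}&u_{33}\end{pmatrix},
\]
whose factors are of type B (indeed unipotent), type A, and type C respectively, each of determinant one because $u_{11},u_{22},u_{33}$ are unitriangular; multiplying out verifies the identity. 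Hence $b$ is a product of five matrices in $A^{O(n^2)}$, so $b\in A^{O(n^2)}$ with an absolute implied constant, and since this holds for every $b\in B$ we conclude $B\subseteq A^{O(n^2)}$.

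The point that actually uses the earlier machinery is the type~C factor: Theorem~\ref{useful theorem} always keeps the first block $I_1$ among the coordinates it can move, so a transformation confined to $I_2\cup I_3$ — which is exactly what realizes the lower-left block $u_{32}$ — is not directly supplied and must be manufactured by conjugating a type~B transformation by the swapping element of Corollary~\ref{swapping_lemma}. (One may instead avoid $A_{\mathrm{swap}}$ via the Chevalley-type relation $\big[\left(\begin{smallmatrix}I&0&0\\ 0&I&0\\ M&0&I\end{smallmatrix}\right),\left(\begin{smallmatrix}I&I&0\\ 0&I&0\\ 0&0&I\end{smallmatrix}\right)\big]=\left(\begin{smallmatrix}I&0&0\\ 0&I&0\\ 0&M&I\end{smallmatrix}\right)$, which writes the type~C factor as a bounded product of type~A and type~B matrices.) Everything else — checking the two displayed matrix identities and the determinant/support bookkeeping that keeps each factor inside $\mathrm{SL}_n$ — is routine; splitting off the diagonal part $d$ at the outset is precisely what forces the unitriangular factors to have determinant one.
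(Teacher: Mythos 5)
Your proof is correct and takes essentially the same route as the paper: the paper also applies Theorem~\ref{useful theorem} twice (once for each choice of the second active block) to build the part of the matrix involving the first $t$ columns and handles the lower-right block separately, and your explicit factorization $b=P_1P_2P_3d'd''$ with its determinant bookkeeping is a more carefully written version of that sketch. One small remark: your type~C factors (identity on $I_1$, an $\mathrm{SL}_{n-t}$ block on $I_2\cup I_3$) are exactly groumvirate elements, so by Corollary~\ref{groumvirate} they already lie in $A^{4}$, making the conjugation by $A_{\mathrm{swap}}$ (and the alternative commutator trick) an unnecessary though harmless detour that still keeps everything inside $A^{O(n^{2})}$.
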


\begin{proof}
Let $L\in B$. Using Theorem \ref{useful Theorem}, we construct a
primitive which modifies the $\left\{ 1,\dots,\frac{n}{3}\right\} $
columns for row indices $J\subseteq\left\{ \frac{n}{3}+1,\dots,n\right\} $
where $\left|J\right|=\frac{n}{3}$. Applying this primitive 2 times
enables us to construct the $\left\{ 1,\dots,\frac{n}{3}\right\} $
columns of the matrix. Using Lemma \ref{corr}, we can construct the
rest of the columns using a matrix in $A^{4}$. 
\end{proof}
\begin{lem}
The monomial matrices can be constructed in $O\left(n^{2}\right)$
steps, i.e., $W\subseteq A^{O\left(n^{2}\right)}$. 
\end{lem}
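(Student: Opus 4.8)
The plan is to peel a monomial matrix apart into a permutation matrix and a diagonal matrix, and then to handle the permutation matrix using the two tools already in hand: Corollary \ref{groumvirate}, which gives the groumvirate (everything fixing $\langle e_1,\dots,e_{n/3}\rangle$ pointwise and acting with determinant one on the last $\frac{2n}{3}$ coordinates) inside $A^4$, and Theorem \ref{useful theorem}, which realizes in $O(n^2)$ steps any matrix preserving $\langle e_1,\dots,e_{n/3}\rangle\oplus\langle e_j:j\in T\rangle$ and fixing $\langle e_j:j\in\{n/3+1,\dots,n\}\setminus T\rangle$ pointwise, for any choice of $(n/3)$-element set $T\subseteq\{n/3+1,\dots,n\}$. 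First I would observe that every $m\in W$ underlying a permutation $\sigma\in S_n$ factors as $m=P'D$, where $P'$ is the permutation matrix $P_\sigma$ rescaled by $\pm1$ on a single coordinate so that $\det P'=1$, and $D:=(P')^{-1}m$ is diagonal with $\det D=1$. Since $D$ is lower triangular, the previous lemma already gives $D\in A^{O(n^2)}$, so it suffices to construct $P'$ for an arbitrary $\sigma$.

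Next I would split $\sigma$ across a $\frac{2n}{3}$-dimensional block. Writing $B_1=\{1,\dots,n/3\}$, choose an $(n/3)$-element set $T\subseteq\{n/3+1,\dots,n\}$ with $T\supseteq\sigma^{-1}(B_1)\setminus B_1$ — possible because that set has at most $n/3$ elements — and let $\rho$ be any permutation supported on $B_1\cup T$ with $\rho(\sigma^{-1}(j))=j$ for every $j\in B_1$; such a $\rho$ exists because $\sigma^{-1}(B_1)$ and $B_1$ are equal-sized subsets of $B_1\cup T$, so the prescribed partial bijection extends. Then $\sigma':=\sigma\rho^{-1}$ fixes $B_1$ pointwise, i.e. $\sigma'\in\mathrm{Sym}(\{n/3+1,\dots,n\})$, and $\sigma=\sigma'\rho$, hence $P_\sigma=P_{\sigma'}P_\rho$.

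Then I would realize $P'=g\,h$, where $g$ is the monomial matrix with pattern $\sigma'$ and $h=g^{-1}P'$ has pattern $\rho$, both arranged to lie in $\mathrm{SL}_n(\mathbb{F}_p)$ by parking the single correcting sign on one coordinate $j^{*}\in T$. This placement is exactly what makes both factors admissible: because $j^{*}\notin B_1$, the matrix $g$ still fixes $\langle e_1,\dots,e_{n/3}\rangle$ pointwise and acts with determinant one on the remaining coordinates, so $g$ is in the groumvirate and $g\in A^4$ by Corollary \ref{groumvirate}; and because $j^{*}\in B_1\cup T$ while every entry of $h$ outside $B_1\cup T$ is forced to be $1$, the matrix $h$ preserves $\langle e_1,\dots,e_{n/3}\rangle\oplus\langle e_j:j\in T\rangle$ and fixes $\langle e_j:j\in\{n/3+1,\dots,n\}\setminus T\rangle$ pointwise, so Theorem \ref{useful theorem} (with $t=n/3$ and the free index relabelling chosen so that $\{i_1,\dots,i_{n/3}\}=T$) gives $h\in A^{O(n^2)}$. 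Therefore $P'=gh\in A^{O(n^2)}$, and $m=P'D\in A^{O(n^2)}$, which proves $W\subseteq A^{O(n^2)}$.

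The point requiring care is the cost accounting: one must route the entire part of $\sigma$ that interacts with the first $n/3$ coordinates through a \emph{single} $\frac{2n}{3}$-dimensional block $\langle e_1,\dots,e_{n/3}\rangle\oplus\langle e_j:j\in T\rangle$, so that one invocation of Theorem \ref{useful theorem} disposes of it; decomposing $\sigma$ into transpositions and building each separately would cost $O(n^2)$ per transposition and yield only an $O(n^3)$ bound. The determinant and sign bookkeeping — ensuring no intermediate factor is a stray element of $\mathrm{GL}_n\setminus\mathrm{SL}_n$, which for instance forbids an isolated $-1$ on the diagonal — is routine once the correcting sign is kept on a coordinate in $T$, as above.
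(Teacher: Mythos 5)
Your proof is correct and follows essentially the same route as the paper: factor the permutation into an ``inside'' part supported on $\{1,\dots,n/3\}$ together with at most $n/3$ further coordinates (realized via Theorem \ref{useful theorem} in $O(n^2)$ steps) composed with an ``outside'' part supported on $\{n/3+1,\dots,n\}$ (realized inside the groumvirate of Corollary \ref{groumvirate} in $4$ steps), with determinant bookkeeping done in the large block. Your explicit splitting-off of the diagonal factor via the triangular lemma and the careful parking of the correcting sign on a coordinate $j^{*}\in T$ simply make precise what the paper handles by ``multiplying rows by scalars.''
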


\begin{proof}
Using Theorem \ref{useful Theorem}, we construct a matrix which swaps
the $\lbrace1,\dots,t\rbrace$ columns with the $\lbrace\sigma(1),\dots,\sigma(t)\rbrace$
columns in $A^{O(t^{2})}$ steps. Now all of the first $t$ columns
are fixed in position. All that remains is to set the remaining $n-t$
columns in place. By Corollary~\ref{finalcor}, any permutation of
the last $n-t$ columns can be constructed in $O(t^{2})$ steps. 
\end{proof}

\section{A Matching Lower Bound on the Diameter}

Let $t=Cn$ for $0<C<1$. We define the generating set 
\[
A=\left\{ \begin{pmatrix}I_{t\times t} & 0\\
0 & X
\end{pmatrix}:X\in\mathrm{GL}_{n-t}\left(\mathbb{F}_{p}\right)\right\} \bigcup\left\{ e_{i}\leftrightarrow e_{i+1}:i=1,\dots,t\right\} ,
\]
where $e_{i}\leftrightarrow e_{i+1}$ is the swap between $e_{i}$
and $e_{i+1}$ which fixes the other basis vectors. This is a generating
set because it generates the swap matrix (as transpositions generate
the permutations). Moreover, 
\[
\left|A\right|=\Theta\left(p^{\left(n-t\right)^{2}}\right)=\Theta\left(\left(p^{n^{2}}\right)^{\left(\frac{n-t}{n}\right)^{2}}\right)\leq|G|^{1-c}
\]
for an appropriately chosen $c$. Let $B=\begin{pmatrix}0 & I_{t\times t} & 0\\
I_{t\times t} & 0 & 0\\
0 & 0 & I_{n-2t\times n-2t}
\end{pmatrix}$. Assume $B=a_{k}a_{k-1}\dots a_{1}$, where $a_{i}\in A$. Define
$A_{0}=e$, and $A_{l}=a_{l}a_{l-1}\dots a_{1}$ for $1\leq l\leq k$.
We set $\mathcal{F}_{0}=\left\{ 1,\dots,t\right\} $ and $\mathcal{F}_{l}=\left\{ i\in\left\{ 1,\dots,t\right\} :A_{j}e_{i}\in\left\{ e_{1},\dots,e_{t}\right\} \forall1\leq j\leq l\right\} $.
Let 
\[
d_{i,l}=\begin{cases}
t+1-j & \text{if }A_{l}e_{i}=e_{j}\text{ for }j\in\left\{ 1,\dots,t+1\right\} \wedge i\in\mathcal{F}_{l}\\
0 & \text{otherwise}
\end{cases}
\]
and 
\[
d_{l}=\sum_{i=1}^{t}d_{i,l}.
\]
We prove that $d_{l+1}\geq d_{l}-1$ for every $1\leq l\leq k-1$.
Assume $a_{l+1}$ is a swap between $e_{i}$ and $e_{i+1}$ then $d_{l}\neq d_{l+1}$
iff $a_{l+1}$ swaps $t$ and $t+1$, thus $d_{l+1}=d_{l}-1$. Otherwise
$a_{l+1}$ acts on $\left\{ e_{t+1},\dots,e_{n}\right\} $ so it cannot
change $d_{i,l}$ for any $i$. Therefore 
\[
d_{k}\geq d_{0}-k=\sum_{i=1}^{t}i-k=\begin{pmatrix}t+1\\
2
\end{pmatrix}-k.
\]
But $d_{k}=0$ since $B$ is the matrix that swaps $e_{1},\dots,e_{t}$
with $e_{t+1},\dots,e_{2t}$, and thus 
\[
k\geq\begin{pmatrix}t+1\\
2
\end{pmatrix}=\Omega\left(n^{2}\right).
\]

\subsection*{Acknowledgments}

We would like to thank Noam Lifshitz for his guidance, Elad Tzalik
for helpful discussions, and Edan Orzech for reviewing the manuscript.
\bibliographystyle{plain}

\end{document}